\documentclass[10pt]{amsart}
\usepackage{amsmath,amssymb,amsthm,mathrsfs,graphicx,mathtools}
\usepackage[margin=1.5in]{geometry}
\usepackage[colorlinks=true,linkcolor=blue,citecolor=blue]{hyperref}
\newtheorem{theorem}{Theorem}[section]
\newtheorem{lemma}[theorem]{Lemma}
\newtheorem{proposition}[theorem]{Proposition}
\newtheorem{corollary}[theorem]{Corollary}
\theoremstyle{definition}

\theoremstyle{remark}

\numberwithin{equation}{section}

\DeclareMathOperator{\id}{id}
\DeclareMathOperator{\tr}{tr}
\allowdisplaybreaks[4]

\begin{document}

\title{Two Kinds of Curvature Operators in Dimension Four}

\author[Xiaodong Cao]{Xiaodong Cao}
\address{Department of Mathematics, Cornell University, Ithaca, NY, 14853}
\email{xiaodongcao@cornell.edu}

\author[Xiaolong Li]{Xiaolong Li}
\address{Department of Mathematics and Statistics, Auburn University, Auburn, AL, 36849, USA}
\email{xil0005@auburn.edu}
\thanks{The second author was supported in part by NSF-DMS \#2553660 and a start-up grant at Auburn University.}

\subjclass[2020]{Primary 53C20, 53C21; Secondary 53C24, 53C25}

\keywords{Curvature operators, curvature operator of the second kind, Einstein four-manifolds, biorthogonal sectional curvature, positive isotropic curvature}

\begin{abstract}
We prove an invariant identity relating the curvature operators of the
first and second kind in dimension four. Under the natural identification
$S^2_0\cong\Lambda^+\otimes\Lambda^-$, the trace-free Ricci tensor
contributes through a single commutator term. As a consequence, we relate
the curvature operator of the second kind on products of unit self-dual
and anti-self-dual two-forms to biorthogonal sectional curvature.
We then classify closed oriented four-manifolds satisfying
$K^\perp_{\max}\leq S/6$, where $K^\perp_{\max}$ denotes the maximum
biorthogonal sectional curvature and $S$ is the scalar curvature.
The classification includes the cases in which the scalar curvature vanishes.
\end{abstract}
\maketitle

\section{Introduction}\label{sec:introduction}

The Riemannian curvature tensor $R$ of a Riemannian manifold $(M^n,g)$
induces two self-adjoint curvature operators: the curvature operator of
the first kind $\widehat R:\Lambda^2TM\to\Lambda^2TM$ acting on two-forms and the curvature
operator of the second kind $\mathring R:S^2_0TM\to S^2_0TM$ acting on traceless symmetric two-tensors.
Their definitions and our conventions are given in
Section~\ref{sec:preliminaries}. Positivity conditions on these operators
have strong geometric consequences. For instance, B\"ohm and Wilking~\cite{BW08}
proved that a closed Riemannian manifold with two-positive curvature
operator of the first kind is diffeomorphic to a spherical space form.
For the curvature operator of the second kind, Nishikawa~\cite{Nishikawa86}
conjectured the same conclusion under positivity. Cao, Gursky, and
Tran~\cite{CGT23} proved the stronger statement under two-positivity.
Together with the work of the second
author~\cite{Li24} and Nienhaus, Petersen, and Wink~\cite{NPW22}, these
results show that a closed Riemannian manifold with three-nonnegative
curvature operator of the second kind is either flat or diffeomorphic
to a spherical space form. Here $k$-nonnegativity means that the sum of
the $k$ smallest eigenvalues is nonnegative, and $k$-positivity means
that this sum is positive. For nonintegral $\alpha$, the corresponding
sum is defined by linear interpolation between consecutive integer values.
The second author obtained sphere theorems
under $4\frac12$-positivity~\cite{Li22JGA} and classification results
for K\"ahler surfaces under six-nonnegativity~\cite{Li22PAMS}.
For K\"ahler manifolds of arbitrary complex dimension, he obtained
further rigidity and classification results~\cite{Li23MZ}.
We refer the reader to
\cite{Li22product,Li25JLMS,NPWW22,FL24,DF24,CW25,CW26,LP26}
for further developments.

In dimension three, the Weyl tensor vanishes, and Fluck and the second
author~\cite{FL24} expressed the eigenvalues of $\mathring R$ explicitly
in terms of those of $\widehat R$. Using these formulas, they proved
that $\alpha$-positivity and $\alpha$-nonnegativity of the curvature
operator of the second kind are preserved by the Ricci flow on closed
three-manifolds for every $\alpha\in[1,5]$.
In dimension four, we use the Hodge decomposition
$\Lambda^2V=\Lambda^+V\oplus\Lambda^-V$ to relate the two operators.
We identify two-forms
with skew-adjoint endomorphisms and write $\omega\eta$ for their
composition. If $\omega\in\Lambda^+V$ and $\eta\in\Lambda^-V$, this
composition belongs to $S^2_0(V)$. Our main result is the following
identity.

\begin{theorem}\label{thm:main}
Let $(V,g)$ be an oriented Euclidean vector space of dimension four,
and let $R$ be an algebraic curvature tensor on $V$. Denote its scalar
curvature by $S$ and its curvature operators of the first and second
kind by $\widehat R$ and $\mathring R$, respectively. Then
\begin{equation}\label{eq:master}
 \mathring R(\omega\eta,\xi\zeta)
 +\langle\eta,\zeta\rangle\widehat R(\omega,\xi)
 +\langle\omega,\xi\rangle\widehat R(\eta,\zeta)
 +\frac12\widehat R([\omega,\xi],[\eta,\zeta])
 =\frac S4\langle\omega,\xi\rangle\langle\eta,\zeta\rangle
\end{equation}
for all $\omega,\xi\in\Lambda^+V$ and $\eta,\zeta\in\Lambda^-V$.
Here $\Lambda^\pm V$ are the $\pm1$ eigenspaces of the Hodge star,
and $[\cdot,\cdot]$ is the Lie bracket under the identification
$\Lambda^2V\cong\mathfrak{so}(4)$.
\end{theorem}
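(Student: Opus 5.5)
Both sides of \eqref{eq:master} are multilinear in $(\omega,\xi,\eta,\zeta)$ and linear in $R$. I would therefore decompose $R$ into irreducible pieces and verify the identity on each piece. In dimension four,
\[
R=\frac{S}{24}\,g\owedge g+\frac12\,E\owedge g+W^++W^-,
\]
where $E$ is the trace-free Ricci tensor. Correspondingly, $\widehat R$ has block form
\[
\widehat R=\begin{pmatrix} W^++\frac S{12} & B\\ B^* & W^-+\frac S{12}\end{pmatrix},
\qquad B:\Lambda^-\to\Lambda^+ .
\]
The off-diagonal block $B$ corresponds to $E$ under $S^2_0\cong\Lambda^+\otimes\Lambda^-$. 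It suffices to prove the identity for $R=g\owedge g$, for $R=W^+$ (the case $W^-$ then follows by reversing orientation, which swaps the roles of $\omega,\xi$ and $\eta,\zeta$), and for $R=E\owedge g$.

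\textbf{Setup.} I would take an oriented orthonormal basis $e_1,\dots,e_4$. Using it, I would write down the standard bases
\[
\omega_1=e_{12}+e_{34},\quad \omega_2=e_{13}-e_{24},\quad \omega_3=e_{14}+e_{23}
\]
of $\Lambda^+$, and $\eta_i$ similarly (with the opposite signs) for $\Lambda^-$, normalized compatibly with the paper's inner product conventions. As endomorphisms these satisfy quaternion relations: $\omega_i\omega_j=-\delta_{ij}+\epsilon_{ijk}\omega_k$ up to normalization, the $\omega_i$ commute with the $\eta_j$, and $[\omega_i,\omega_j]=2\epsilon_{ijk}\omega_k$. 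Since $\Lambda^+$ and $\Lambda^-$ commute, we get
\[
[\omega,\xi]\in\Lambda^+,\qquad [\eta,\zeta]\in\Lambda^-,
\]
so the commutator term $\tfrac12\widehat R([\omega,\xi],[\eta,\zeta])$ sees only the block $B$. The nine products $\omega_i\eta_j$ are symmetric, traceless and mutually orthogonal, which gives the identification $S^2_0\cong\Lambda^+\otimes\Lambda^-$.

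Next I would use the definition of $\mathring R$, namely $\mathring R(h,k)=\sum R(e_i,e_j,e_k,e_l)\,h_{il}k_{jk}$ up to the paper's conventions. For $h=\omega\eta$ and $k=\xi\zeta$, I would rewrite this sum as a trace expression in the endomorphisms. The key tool is the formula expressing $\widehat R(\alpha)$ as $\tfrac12\sum R(e_i,e_j)\alpha_{ij}$ together with the Bianchi identity. The aim is a general expression of the form
\[
\mathring R(h,k)=\sum_a \operatorname{tr}\bigl(\ldots\bigr)\ \text{involving } \widehat R \text{ applied to products like } h\,\alpha\,k .
\]

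\textbf{Piece-by-piece verification.}
\begin{itemize}
\item[(a)] $R=g\owedge g$, i.e.\ constant curvature with $\widehat R=\frac S{12}\id$. Here $\mathring R=-\frac S{12}\id$ on $S^2_0$ (sign as per the paper's convention, where $\mathring R$ is positive for the round sphere). The commutator term vanishes, since $\langle\Lambda^+,\Lambda^-\rangle=0$. The identity reduces to $\langle\omega\eta,\xi\zeta\rangle=\langle\omega,\xi\rangle\langle\eta,\zeta\rangle$, suitably normalized, together with $\frac S{12}+\frac S{12}+\frac S{12}=\frac S4$. I would check this normalization on the basis.
\item[(b)] $R=W^+$. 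Here $\widehat R$ vanishes off $\Lambda^+$, so the claim becomes
\[
\mathring{W}^+(\omega\eta,\xi\zeta)=-\langle\eta,\zeta\rangle\,W^+(\omega,\xi).
\]
This says $\mathring W^+$ acts as $-W^+\otimes\id_{\Lambda^-}$. I would verify it by checking on basis elements $\omega_i\eta_j$, using that $W^+$ is diagonalizable and commutes with the $SO(3)$ acting on $\Lambda^-$. An equivariance argument under $SU(2)_-$ forces $\mathring W^+$ to have the form $A\otimes\id$, and a single computation fixes $A=-W^+$.
\item[(c)] $R=E\owedge g$. The diagonal blocks of $\widehat R$ vanish, so the claim is
\[
\mathring R(\omega\eta,\xi\zeta)=-\tfrac12\langle B[\eta,\zeta],[\omega,\xi]\rangle .
\]
Since $[\omega,\xi]$ is essentially the cross product $\omega\times\xi$, the left side must be a $\mathfrak{so}(4)$-equivariant bilinear form in $(\Lambda^+\otimes\Lambda^-)^{\otimes2}$ that is linear in $B\in\Lambda^+\otimes\Lambda^-$. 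By representation theory there is a unique such invariant up to scale, namely $\det$-type contractions $\epsilon\otimes\epsilon$. So only a constant needs to be computed, e.g.\ with $E=e_1\otimes e_1-e_2\otimes e_2$.
\end{itemize}

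\textbf{Main obstacle.} The hard part is fixing signs and normalizations consistently: the inner product on $\Lambda^2$, the convention for $\mathring R$, and the correspondence $B\leftrightarrow E$. I would first carry out the explicit basis computation for case (c), since the commutator coefficient $\tfrac12$ is the least predictable. The uniqueness arguments via $SO(4)=SU(2)\times SU(2)/\pm1$ reduce everything to a few scalar checks.
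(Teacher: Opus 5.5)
Your route is correct and genuinely different from the paper's. The paper never decomposes $R$. It writes out the nine products $\omega_i\eta_j$ explicitly and evaluates every entry of the $9\times9$ matrix of $\mathring R$ in terms of $R_{ijkl}$ via Lemma~\ref{lemma: eq identities}. It then recognizes each entry as an entry of the blocks $A,B,C$ of $\widehat R$, checks \eqref{eq:main identity basis s3} entrywise using \eqref{eq:omega eta commutator}, and extends by multilinearity. You instead use linearity and $SO(4)$-equivariance of $R\mapsto\mathring R$, together with the decomposition $S^2(\Lambda^+\otimes\Lambda^-)=(S^2\Lambda^+\otimes S^2\Lambda^-)\oplus(\Lambda^2\Lambda^+\otimes\Lambda^2\Lambda^-)$. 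In it, $\mathbb R$, $S^2_0\Lambda^+$, $S^2_0\Lambda^-$ and $\Lambda^+\otimes\Lambda^-\cong S^2_0V$ each occur exactly once. This multiplicity-one fact is what justifies your uniqueness claims in (b) and (c), and it reduces the theorem to three constants. Your argument explains the shape of the identity, in particular why the trace-free Ricci tensor can enter only through the $\Lambda^2\Lambda^+\otimes\Lambda^2\Lambda^-$ component, which the two brackets identify with $\Lambda^+\otimes\Lambda^-$. The paper's computation needs no representation theory and yields the full explicit normal form.

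The specific coefficients are the whole content of the statement, so the scalar checks you defer have to be carried out, and two steps need correcting.

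\textbf{(a)} With the paper's conventions, constant curvature $\kappa=S/12$ gives $\mathring R=+\frac S{12}\id$, not $-\frac S{12}\id$. Your sum $\frac S{12}+\frac S{12}+\frac S{12}=\frac S4$ already uses the plus sign; with $-\frac S{12}$ the check would fail.

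\textbf{(b)} $SU(2)_-$-invariance alone only gives $\mathring W^+=Q\otimes\langle\cdot,\cdot\rangle_{\Lambda^-}$ for some symmetric form $Q$ on $\Lambda^+$. You also need Schur for $SU(2)_+$ to get $Q=cW^+$. The entry $\mathring R(\omega_1\eta_1,\omega_2\eta_1)=-\widehat R(\omega_1,\omega_2)$ receives contributions only from the $W^+$ part, so it then gives $c=-1$.

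\textbf{(c)} Lemma~\ref{lemma: eq identities} gives $\mathring R(\omega_1\eta_1,\omega_2\eta_2)=\frac12(R_{1414}-R_{2323})=\widehat R(\omega_3,\eta_3)$. Only the trace-free Ricci part contributes to this entry. Since $\widehat R([\omega_1,\omega_2],[\eta_1,\eta_2])=-2\widehat R(\omega_3,\eta_3)$, this fixes the coefficient $\frac12$. Your test tensor $E=e_1\otimes e_1-e_2\otimes e_2$ works for this entry, since it gives $R_{1414}-R_{2323}=1$.

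The trace formula sketched in your setup is never used and can be dropped.
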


In particular, if $\{\omega_i\}_{i=1}^3$ and $\{\eta_j\}_{j=1}^3$
are orthonormal bases of $\Lambda^+V$ and $\Lambda^-V$, then
\begin{equation}\label{eq:main identity basis}
 \mathring R(\omega_i\eta_j,\omega_k\eta_l)
 +\delta_{jl}\widehat R(\omega_i,\omega_k)
 +\delta_{ik}\widehat R(\eta_j,\eta_l)
 +\frac12\widehat R([\omega_i,\omega_k],[\eta_j,\eta_l])
 =\frac S4\delta_{ik}\delta_{jl}.
\end{equation}
Cao, Gursky, and Tran~\cite{CGT23} obtained a $9\times9$ matrix normal
form for the curvature operator of the second kind by choosing
eigenbases of the Weyl operators $W^+$ and $W^-$. In this normal form,
the diagonal blocks are determined by the Weyl tensor and the scalar
curvature, and the skew-symmetric off-diagonal blocks are determined
by the trace-free Ricci tensor. Choosing these eigenbases in
\eqref{eq:main identity basis} recovers their normal form.
Theorem~\ref{thm:main} gives an invariant formulation in which the
trace-free Ricci contribution is expressed by the commutator term.

Setting $\xi=\omega$ and $\zeta=\eta$ gives the following consequence.
\begin{corollary}\label{cor:diagonal-identity}
For $\omega\in\Lambda^+V$ and $\eta\in\Lambda^-V$,
\begin{equation}\label{eq:diagonal-identity}
 \mathring R(\omega\eta,\omega\eta)
 +|\eta|^2\widehat R(\omega,\omega)
 +|\omega|^2\widehat R(\eta,\eta)
 =\frac S4|\omega|^2|\eta|^2.
\end{equation}
\end{corollary}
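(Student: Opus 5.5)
The plan is to specialize Theorem~\ref{thm:main} to the diagonal case $\xi=\omega$, $\zeta=\eta$. Then \eqref{eq:master} becomes
\[
 \mathring R(\omega\eta,\omega\eta)
 +\langle\eta,\eta\rangle\widehat R(\omega,\omega)
 +\langle\omega,\omega\rangle\widehat R(\eta,\eta)
 +\frac12\widehat R([\omega,\omega],[\eta,\eta])
 =\frac S4\langle\omega,\omega\rangle\langle\eta,\eta\rangle .
\]
Under the identification $\Lambda^2V\cong\mathfrak{so}(4)$, the bracket is the commutator of skew-adjoint endomorphisms. Hence $[\omega,\omega]=\omega\omega-\omega\omega=0$, and likewise $[\eta,\eta]=0$.

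Because $\widehat R$ is bilinear, the commutator term $\frac12\widehat R(0,0)$ vanishes. Writing $\langle\omega,\omega\rangle=|\omega|^2$ and $\langle\eta,\eta\rangle=|\eta|^2$ then gives \eqref{eq:diagonal-identity} exactly. Nothing else is needed: $\omega\eta$ lies in $S^2_0(V)$ as stated before the theorem, so $\mathring R(\omega\eta,\omega\eta)$ is defined.

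No step here is a real obstacle. The only point to confirm is that the bracket in Theorem~\ref{thm:main} is antisymmetric, which holds because it is the Lie bracket of $\mathfrak{so}(4)$.
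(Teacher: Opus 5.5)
Your proposal is correct and follows the paper's own argument: set $\xi=\omega$ and $\zeta=\eta$ in \eqref{eq:master}. Since $[\omega,\omega]=0$ and $[\eta,\eta]=0$, the commutator term vanishes, and the remaining terms give \eqref{eq:diagonal-identity}.
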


To formulate the geometric application, define
\begin{equation}\label{eq:Sigma-definition}
 \Sigma_p=\left\{h\in S^2_0(T_pM):h^2=\frac14\id\right\}.
\end{equation}
Here we view symmetric tensors as self-adjoint endomorphisms.
Every $h\in\Sigma_p$ has $|h|=1$, and
\[
 \Sigma_p=\{\omega\eta: \omega\in\Lambda^+T_pM,\
 \eta\in\Lambda^-T_pM,\ |\omega|=|\eta|=1\}.
\]
We write $\mathring R|_\Sigma\geq\kappa$ if
$\mathring R(h,h)\geq\kappa(p)$ for every $p\in M$ and every
$h\in\Sigma_p$, and use the analogous notation for strict inequalities.
This restriction has a direct interpretation in terms of
\emph{biorthogonal sectional curvature}. For a two-plane
$P\subset T_pM$, set
\[
 K^\perp(P)=\frac{K(P)+K(P^\perp)}2,
 \qquad
 K^\perp_{\max}(p)=\max_{P\subset T_pM}K^\perp(P).
\]
If $h_P=\tfrac12(\pi_{P^\perp}-\pi_P)$, where $\pi_P$ is orthogonal
projection onto $P$, Corollary~\ref{cor:diagonal-identity} gives
\begin{equation*}
 \mathring R(h_P,h_P)=\frac S4-2K^\perp(P).
\end{equation*}
Every element of $\Sigma_p$ occurs in this way. Consequently,
\begin{equation*}
 \mathring R|_\Sigma\geq-\frac S{12}
 \quad\Longleftrightarrow\quad
 K^\perp_{\max}\leq\frac S6,
\end{equation*}
with the corresponding equivalence for strict inequalities.
Thus this condition depends only on the Weyl tensor and the scalar
curvature, whereas $\mathring R$ also depends on the trace-free Ricci tensor.
On an Einstein four-manifold, biorthogonal sectional curvature agrees
with sectional curvature. For related classification results under
sectional curvature pinching, see Cao and Tran~\cite{CT18}.

Biorthogonal curvature has been used to study the topology and
geometry of four-manifolds; see, in particular,
Costa and Ribeiro~\cite{CR14}. Under the assumption $S>0$, they proved
a classification under the condition
$K^\perp_{\min}\geq S/24$ and observed that the same classification
holds under $K^\perp_{\max}\leq S/6$.
The case $S>0$ of the next theorem therefore has a known
biorthogonal formulation. We express the condition in terms of the
curvature operator of the second kind and include the cases in which
the scalar curvature vanishes. The proof uses the classification
of four-manifolds with positive isotropic curvature by
Chen, Tang, and Zhu~\cite{CTZ12}, which builds on Hamilton's fundamental
curvature pinching estimates and Ricci flow with surgery~\cite{Ham97}.
We also use the Ricci flow analysis of
Micallef and Wang~\cite{MW93} and the rigidity of compact self-dual
K\"ahler surfaces~\cite{Der83,Ito84}. A related rigidity theorem of
Hitchin~\cite{Hit81} states that a compact self-dual Einstein
four-manifold with positive scalar curvature is, up to scaling,
the round $\mathbb{S}^4$ or $\mathbb{CP}^2$ with the Fubini-Study metric.

\begin{theorem}\label{thm:pinching}
Let $(M^4,g)$ be a closed, connected, oriented Riemannian four-manifold.
\begin{enumerate}
\item If $\mathring R|_\Sigma>-S/12$, then $(M,g)$ has positive
isotropic curvature. Moreover, $M$ is diffeomorphic to $\mathbb{S}^4$ or to a
finite connected sum
\[
 \mathop{\#}_{r=1}^{N}(\mathbb{S}^3\times\mathbb R)/\Gamma_r,
\]
where each $\Gamma_r$ is a cocompact discrete subgroup of
$\operatorname{Isom}(\mathbb{S}^3\times\mathbb R)$ acting freely, and each
quotient is orientable. In particular, if $M$ is simply connected,
then $M$ is diffeomorphic to $\mathbb{S}^4$.
\item If $\mathring R|_\Sigma\geq-S/12$, then $(M,g)$ has
nonnegative isotropic curvature, and exactly one of the following
holds:
\begin{enumerate}
\renewcommand{\theenumii}{\alph{enumii}}
\item $M$ admits a metric with positive isotropic curvature and has
one of the diffeomorphism types in part~\textup{(1)};
\item $(M,g)$ is flat;
\item the universal Riemannian cover is isometric to
$\mathbb{S}^2(c)\times\mathbb H^2(-c)$ for some $c>0$;
\item $(M,g)$ is isometric, up to scaling, to $\mathbb{CP}^2$
with the Fubini-Study metric.
\end{enumerate}
In particular, if $M$ is simply connected, then $M$ is diffeomorphic
to $\mathbb{S}^4$ or $\mathbb{CP}^2$.
\end{enumerate}
\end{theorem}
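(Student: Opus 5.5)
The plan is to convert the hypothesis into a pointwise condition on the Weyl tensor and then apply known classification results. By Corollary~\ref{cor:diagonal-identity}, $\mathring R|_\Sigma\ge -S/12$ means $K^\perp(P)\le S/6$ for every two-plane $P$. Write $\widehat R$ in block form with diagonal blocks $W^\pm+\frac S{12}\id$ and off-diagonal block given by the trace-free Ricci tensor. For unit $\omega\in\Lambda^+$ and $\eta\in\Lambda^-$, the plane $P$ with $h_P=\omega\eta$ corresponds to the two-form $(\omega+\eta)/\sqrt2$, and $P^\perp$ to $(\omega-\eta)/\sqrt2$ (up to sign conventions). Averaging kills the cross term, so
\[
K^\perp(P)=\tfrac12\bigl(\widehat R(\omega,\omega)+\widehat R(\eta,\eta)\bigr)=\tfrac12\bigl(W^+(\omega,\omega)+W^-(\eta,\eta)\bigr)+\tfrac S{12}.
\]
The hypothesis is therefore equivalent to
\[
\lambda_{\max}(W^+)+\lambda_{\max}(W^-)\le \tfrac S6 ,
\]
where $\lambda_{\max}$ denotes the largest eigenvalue. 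Since $W^\pm$ are trace-free, $\lambda_{\max}(W^\pm)\ge 0$, so this forces $S\ge0$.

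Next I would prove positive (respectively nonnegative) isotropic curvature. In dimension four, PIC is equivalent to positivity of the operators $\frac S6\id - W^\pm$ restricted to $\Lambda^\pm$, i.e. $\lambda_{\max}(W^\pm)<S/6$ (Micallef–Wang). From the displayed inequality, $\lambda_{\max}(W^+)\le S/6-\lambda_{\max}(W^-)\le S/6$, and similarly for $W^-$. The strict hypothesis gives strict inequalities, hence PIC. Part~(1) then follows from the classification of closed PIC four-manifolds by Chen–Tang–Zhu~\cite{CTZ12}. The simply connected statement is the case in which the connected sum is empty: each nonempty summand has infinite fundamental group, so only $\mathbb S^4$ remains.

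For part~(2), run the Ricci flow from $g$. Nonnegative isotropic curvature is preserved, and by the strong maximum principle analysis of Micallef–Wang~\cite{MW93}, either the metric becomes PIC immediately (case~(a)), or the zero set persists and one has a rigidity dichotomy. The rigid alternatives are:
\begin{itemize}
\item[(i)] $g$ is locally reducible, or
\item[(ii)] $\lambda_{\max}(W^+)=S/6$ identically, or the same for $W^-$. Reversing orientation if needed, assume it holds for $W^+$.
\end{itemize}

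In case~(ii), equality in our inequality forces $\lambda_{\max}(W^-)=0$, hence $W^-=0$. Micallef–Wang's argument then shows that the universal cover is Kähler with $W^+$ of the Kähler form $\operatorname{diag}(S/6,-S/12,-S/12)$. A compact self-dual Kähler surface is, by Derdziński~\cite{Der83} and Itoh~\cite{Ito84}, either flat-type, a product $\mathbb S^2(c)\times\mathbb H^2(-c)$ quotient, or $\mathbb{CP}^2$ with the Fubini–Study metric. This yields cases~(b)–(d).

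In case~(i), with the universal cover a product, the pieces are surfaces or $\mathbb R\times N^3$. One checks which products satisfy the Weyl inequality:
\begin{itemize}
\item $\mathbb R\times N^3$ is conformally flat iff $N$ has constant curvature. Here the inequality should force $N$ to be flat or round, giving flat (case~(b)) or $\mathbb S^3\times\mathbb R$ quotients, which admit PIC (case~(a)).
\item For a surface product $\Sigma_1\times\Sigma_2$ with Gauss curvatures $k_1,k_2$, one gets $\lambda_{\max}(W^\pm)=(k_1+k_2)/6$ and $S=2(k_1+k_2)$. The inequality forces $k_1+k_2\ge0$ with cases… I expect it leaves only $k_1+k_2=0$ with constant curvatures, giving $\mathbb S^2\times\mathbb H^2$, or the flat case.
\end{itemize}

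The main obstacle is this last bookkeeping: extracting constancy of curvatures on the factors from pointwise inequalities, and checking that $\mathbb S^2\times\mathbb S^2$ is excluded (it has $W^\pm$ with $\lambda_{\max}=S/6$ each, violating the inequality). The exclusivity of the cases (a)–(d) follows from topology (Euler characteristic/signature or fundamental group) together with the fact that the rigid models do not admit PIC metrics.
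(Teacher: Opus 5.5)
The reduction to $\lambda_{\max}(W^+)+\lambda_{\max}(W^-)\le S/6$, the isotropic curvature step, and part~(1) agree with the paper. The gap is in part~(2).

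\textbf{Two metrics are being mixed.} The rigid alternatives from the Micallef--Wang strong maximum principle describe the evolved metric $g(t)$ for small $t>0$. Your pinching inequality is only available for the original metric $g$, and you give no reason why the flow would preserve it. So three of your steps each combine facts about two different metrics: ``equality in our inequality forces $W^-=0$'' in case~(ii), ``check which products satisfy the Weyl inequality'' in case~(i), and ``Micallef--Wang's argument shows the universal cover is K\"ahler''. A limiting argument as $t\to0$ might repair this, but you do not supply one.

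The paper avoids the issue by taking only \emph{topological} information from Micallef--Wang: if $M$ admits no PIC metric, some finite cover $N$ has $b_2(N)>0$. It then returns to the lifted original metric. After possibly reversing orientation, it takes a nonzero harmonic self-dual form $\varphi$ and uses $0=\int_N\bigl(|\nabla\varphi|^2+\langle(\tfrac S3\id-2W^+)\varphi,\varphi\rangle\bigr)\,dV$. Since $\lambda_{\max}(W^+)\le S/6$ holds for $g$, the form $\varphi$ is $g$-parallel, so $g$ itself is K\"ahler on $N$. The K\"ahler spectrum $\{S/6,-S/12,-S/12\}$ of $W^+$ gives $\lambda_{\max}(W^+)=S/6$, the pinching then forces $W^-=0$, and Derdzi\'nski and Itoh finish. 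Because this Bochner argument applies directly to $g$, no separate analysis of reducible metrics is needed.

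\textbf{Errors in the case~(i) bookkeeping.}
\begin{itemize}
\item For $\Sigma_1\times\Sigma_2$ one has $W^\pm=\operatorname{diag}\bigl(\tfrac{k_1+k_2}3,-\tfrac{k_1+k_2}6,-\tfrac{k_1+k_2}6\bigr)$. So $\lambda_{\max}(W^\pm)=S/6$ when $S\ge0$, not $(k_1+k_2)/6=S/12$. With your value the pinching reads $S/6\le S/6$ and yields nothing, which contradicts your own remark that $\mathbb{S}^2\times\mathbb{S}^2$ is excluded. With the correct value it forces $S=0$, i.e.\ $k_1(x)+k_2(y)\equiv0$, hence constant curvatures $\pm c$.
\item For $\mathbb R\times N^3$, the pinching amounts to $\mu_{\max}\le2(\mu_a+\mu_b)$. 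Here $\mu_{\max}$ is the largest eigenvalue of the curvature operator of $N$ and $\mu_a,\mu_b$ are the other two. This does not force $N$ to be round: $\mathbb R\times$(a mildly squashed Berger sphere) satisfies it, even strictly.
\end{itemize}

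\textbf{Two steps are missing.} First, you do not show that $M$ itself, and not just its universal cover, is $\mathbb{CP}^2$; the paper uses $1=\tau(\mathbb{CP}^2)=d\,\tau(M)$. Second, ``the rigid models do not admit PIC metrics'' is exactly what exclusivity requires, and you do not prove it. The paper's argument is that PIC forces $b_2=0$ on every finite cover by the same Weitzenb\"ock formula, whereas each of (b)--(d) has a finite cover with $b_2>0$.
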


Equality is attained by the Fubini-Study metric and by
products $\mathbb{S}^2(c)\times\mathbb H^2(-c)$; closed examples of the latter
type are obtained by taking a closed hyperbolic quotient of the second
factor. Together with flat metrics, these examples realize
parts~(2)(b)-(2)(d) of
Theorem~\ref{thm:pinching}.

The paper is organized as follows. Section~\ref{sec:preliminaries}
fixes the conventions for the two curvature operators.
Section~\ref{sec:dimension-four} describes the Hodge decomposition,
the relation between the two operators in dimension four, and
biorthogonal curvature.
Section~\ref{sec:main-proof} proves Theorem~\ref{thm:main} and
Corollary~\ref{cor:diagonal-identity}, including the full matrix
calculation. Section~\ref{sec:pinching-proof} proves
Theorem~\ref{thm:pinching}.

\medskip
\noindent\textbf{AI disclosure.}
The identity in Theorem~\ref{thm:main} was discovered by the authors
two years before the preparation of this manuscript. AI tools were
only used to assist with the writing. The authors are responsible for the mathematical content.

\medskip
\noindent\textbf{Acknowledgments.}
This project was completed during the second author's multiple visits
to the Department of Mathematics at Cornell University. He thanks
the department for its hospitality.

\section{Preliminaries}\label{sec:preliminaries}

Let $(V,g)$ be a Euclidean vector space of dimension $n\geq2$, and let
$\{e_1,\ldots,e_n\}$ be an orthonormal basis. We identify $V$ with
$V^*$ using $g$. Denote by $\Lambda^2V$, $S^2(V)$, and $S^2_0(V)$
the spaces of two-forms, symmetric two-tensors, and traceless symmetric
two-tensors, respectively. Their inner products are given by
\begin{equation*}
 \langle \alpha,\beta\rangle_{\Lambda^2}
 =\frac12\sum_{i,j=1}^n\alpha_{ij}\beta_{ij},
 \qquad
 \langle h,k\rangle_{S^2}
 =\sum_{i,j=1}^n h_{ij}k_{ij}.
\end{equation*}
We use the wedge product and symmetric product
\[
 u\wedge v=u\otimes v-v\otimes u,
 \qquad
 u\odot v=u\otimes v+v\otimes u.
\]
Then $\{e_i\wedge e_j:1\leq i<j\leq n\}$ is an orthonormal basis of
$\Lambda^2V$, and
\[
 \left\{\frac1{\sqrt2}e_i\odot e_j:1\leq i<j\leq n\right\}
 \cup\left\{\frac12e_i\odot e_i:1\leq i\leq n\right\}
\]
is an orthonormal basis of $S^2(V)$.

An algebraic curvature tensor on $V$ is a multilinear map
$R:V^4\to\mathbb R$ satisfying the symmetries
\[R(X,Y,Z,W)=-R(Y,X,Z,W)=-R(X,Y,W,Z)=R(Z,W,X,Y)\]
and the first Bianchi identity
\[R(X,Y,Z,W)+R(Y,Z,X,W)+R(Z,X,Y,W)=0\]
for all $X,Y,Z,W\in V$.
We write $R_{ijkl}=R(e_i,e_j,e_k,e_l)$ and adopt the sign convention for
which $R_{ijij}>0$ for $i\neq j$ on the round sphere. The Ricci tensor
and scalar curvature are
\[
 R_{ij}=\sum_{k=1}^n R_{kikj},
 \qquad
 S=\sum_{i=1}^n R_{ii}
  =\sum_{i,j=1}^n R_{ijij}.
\]
We say that $R$ is Einstein if $R_{ij}=\lambda g_{ij}$ for some
$\lambda\in\mathbb R$.

The curvature operator of the first kind is the self-adjoint operator
$\widehat R:\Lambda^2V\to\Lambda^2V$ defined by
\[
 \widehat R(e_i\wedge e_j)=\frac12\sum_{k,l}R_{ijkl}\,e_k\wedge e_l. 
\]
Equivalently,
\[
 \langle\widehat R(X\wedge Y),Z\wedge W \rangle
 =R(X,Y,Z,W).
\]
We use $\widehat R(\alpha,\beta)$ for $\langle\widehat R\alpha,\beta\rangle$.
The curvature operator of the second kind is
\[
 \mathring R=\pi\circ\left.\overline R\right|_{S^2_0(V)}:
 S^2_0(V)\to S^2_0(V),
\]
where $\overline R:S^2(V)\to S^2(V)$ is defined by
\[
 \overline R(e_i\odot e_j)=\sum_{k,l}R_{iklj}\,e_k\odot e_l,
\]
and $\pi(h)=h-\frac{\tr(h)}n g$ is the orthogonal projection onto
$S^2_0(V)$. Both $\overline R$ and $\mathring R$ are self-adjoint.
For $h,k\in S^2_0(V)$, we write
\[
 \mathring R(h,k)
 =\langle\mathring Rh,k\rangle
 =\langle\overline Rh,k\rangle.
\]
Since $\overline R$ is self-adjoint and
$\overline R(g)=-\operatorname{Ric}$, it preserves $S^2_0(V)$ if and
only if $R$ is Einstein. If $R$ has constant sectional curvature
$\kappa$, then $\widehat R=\kappa\id_{\Lambda^2V}$ and
$\mathring R=\kappa\id_{S^2_0(V)}$.

For later computations, we recall the following identities
from~\cite[Proposition~2.6]{Li25JLMS}.
\begin{lemma}\label{lemma: eq identities}
For all indices $i,j,k,l$, we have
\begin{equation*}
 \langle\overline R(e_i\odot e_j),e_k\odot e_l\rangle
 =2\bigl(R_{iklj}+R_{ilkj}\bigr).
\end{equation*}
If $i\ne j$ and $k\ne l$, then
\begin{equation*}
 \langle \mathring R(e_i\odot e_j),e_k\odot e_l\rangle
 =2\bigl(R_{iklj}+R_{ilkj}\bigr).
\end{equation*}
\end{lemma}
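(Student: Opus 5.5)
The plan is to compute directly from the definition of $\overline R$ and the inner product on $S^2(V)$. The second statement will then follow because $\pi$ is an orthogonal projection.

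\emph{Well-definedness.} Since $e_i\odot e_j=e_j\odot e_i$, I first check that the defining formula for $\overline R$ is symmetric in $i,j$. The pair symmetry and the two antisymmetries give
\[
 R_{ipqj}=R_{qjip}=R_{jqpi}.
\]
Swapping $i\leftrightarrow j$ and relabeling $p\leftrightarrow q$ in $\sum_{p,q}R_{jpqi}\,e_p\odot e_q$ therefore returns $\sum_{p,q}R_{ipqj}\,e_p\odot e_q$, because $e_p\odot e_q$ is symmetric. So $\overline R$ is well defined.

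\emph{First identity.} The components of $e_p\odot e_q$ are $(e_p\odot e_q)_{ab}=\delta_{pa}\delta_{qb}+\delta_{qa}\delta_{pb}$. Summing over $a,b$ gives
\[
 \langle e_p\odot e_q,e_k\odot e_l\rangle
 =\sum_{a,b}(\delta_{pa}\delta_{qb}+\delta_{qa}\delta_{pb})(\delta_{ka}\delta_{lb}+\delta_{la}\delta_{kb})
 =2(\delta_{pk}\delta_{ql}+\delta_{pl}\delta_{qk}).
\]
Hence
\[
 \langle\overline R(e_i\odot e_j),e_k\odot e_l\rangle
 =\sum_{p,q}R_{ipqj}\cdot 2(\delta_{pk}\delta_{ql}+\delta_{pl}\delta_{qk})
 =2(R_{iklj}+R_{ilkj}),
\]
which is the first identity.

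\emph{Second identity.} If $i\neq j$, then $\tr(e_i\odot e_j)=2\delta_{ij}=0$, so $e_i\odot e_j\in S^2_0(V)$; likewise $e_k\odot e_l\in S^2_0(V)$ when $k\neq l$. Since $\pi$ is the orthogonal projection onto $S^2_0(V)$, for $h,k\in S^2_0(V)$ we have
\[
 \langle\mathring Rh,k\rangle=\langle\pi\overline Rh,k\rangle=\langle\overline Rh,k\rangle.
\]
The second identity therefore reduces to the first.

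The only point needing care is the factor $2$ coming from the unnormalized symmetric product together with the unnormalized inner product on $S^2(V)$. Beyond tracking that factor, the argument is a routine index computation.
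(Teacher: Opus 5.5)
Your proposal is correct and follows the same route as the paper: compute $\langle e_p\odot e_q,e_k\odot e_l\rangle=2(\delta_{pk}\delta_{ql}+\delta_{pl}\delta_{qk})$, substitute into the definition of $\overline R$, and note that the orthogonal projection $\pi$ does not change the pairing when $e_i\odot e_j$ and $e_k\odot e_l$ are trace-free. Your extra check that the defining formula for $\overline R$ is symmetric in $i,j$ is correct; the paper does not include this step.
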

\begin{proof}
The definition of the symmetric product gives
\[
 \langle e_a\odot e_b,e_k\odot e_l\rangle
 =2(\delta_{ak}\delta_{bl}+\delta_{al}\delta_{bk}).
\]
Applying this to the definition of $\overline R$ proves the first
identity. When $i\ne j$ and $k\ne l$, both symmetric products are
trace-free, so orthogonal projection onto $S^2_0(V)$ does not change
the pairing. This proves the second identity.
\end{proof}

\section{Curvature operators in dimension four}\label{sec:dimension-four}

Throughout this section, let $(V,g)$ be an oriented Euclidean vector
space of dimension four. The Hodge star operator gives the orthogonal
decomposition
\begin{equation*}
    \Lambda^2 V =\Lambda^+ V \oplus \Lambda^- V,
\end{equation*}
where $\Lambda^+V$ and $\Lambda^-V$ are its eigenspaces with
eigenvalues $1$ and $-1$, respectively.
For an oriented orthonormal basis \(\{e_1,\dots,e_4\}\), set
\begin{equation}\label{eq: omega def}
\begin{aligned}
\omega_1 &= \tfrac1{\sqrt2}(e_1\wedge e_2+e_3\wedge e_4),\\
\omega_2 &= \tfrac1{\sqrt2}(e_1\wedge e_3+e_4\wedge e_2),\\
\omega_3 &= \tfrac1{\sqrt2}(e_1\wedge e_4+e_2\wedge e_3),
\end{aligned}
\end{equation}
and 
\begin{equation}\label{eq: eta def}
\begin{aligned}
\eta_1 &= \tfrac1{\sqrt2}(e_1\wedge e_2-e_3\wedge e_4),\\
\eta_2 &= \tfrac1{\sqrt2}(e_1\wedge e_3-e_4\wedge e_2),\\
\eta_3 &= \tfrac1{\sqrt2}(e_1\wedge e_4-e_2\wedge e_3).
\end{aligned}
\end{equation}
Then $\{\omega_1,\omega_2,\omega_3\}$ and
$\{\eta_1,\eta_2,\eta_3\}$ are oriented orthonormal bases of
$\Lambda^+V$ and $\Lambda^-V$, respectively.

Relative to the decomposition \(\Lambda^2 V = \Lambda^+V \oplus \Lambda^-V\), we have
\begin{equation*}
 \widehat R
 =\begin{pmatrix}A&B\\ B^t&C\end{pmatrix}
 =\begin{pmatrix}
 W^++\dfrac{S}{12}\id_{\Lambda^+}&B\\[3pt]
 B^t&W^-+\dfrac{S}{12}\id_{\Lambda^-}
 \end{pmatrix}.
\end{equation*}
Here $W^\pm$ are the self-dual and anti-self-dual Weyl operators, and
\begin{equation*}
 \tr A=\tr C=\frac S4.
\end{equation*}
The off-diagonal block $B:\Lambda^-V\to\Lambda^+V$ is determined
by the trace-free Ricci tensor. In particular, $B=0$ if and only
if $R$ is Einstein.

We next describe the isometry between $\Lambda^+V\otimes\Lambda^-V$
and $S^2_0(V)$. Identify a two-form $\alpha$ with the skew-adjoint
endomorphism $\alpha^\sharp$ defined by
\begin{equation*}
 g(\alpha^\sharp x,y)=\alpha(x,y).
\end{equation*}
If $\omega\in\Lambda^+V$ and $\eta\in\Lambda^-V$, then
$\omega^\sharp$ and $\eta^\sharp$ commute.  Their composition is symmetric,
and
\[
 \tr(\omega^\sharp\eta^\sharp)
 =-2\langle\omega,\eta\rangle=0.
\]
Consequently, the map \(\Phi:\Lambda^+V\otimes\Lambda^-V\longrightarrow S^2_0(V)\)
given by
\begin{equation*}
 \Phi(\omega\otimes\eta)(x,y)
 =g(\omega^\sharp\eta^\sharp x,y),
\end{equation*}
is well defined. Moreover,
\begin{equation*}
 \langle\omega^\sharp\eta^\sharp,
        \xi^\sharp\zeta^\sharp\rangle_{S^2}
 =\langle\omega,\xi\rangle_{\Lambda^2}
  \langle\eta,\zeta\rangle_{\Lambda^2},
\end{equation*}
for $\omega,\xi\in\Lambda^+V$ and $\eta,\zeta\in\Lambda^-V$.
This follows from the anticommutator identities
\begin{align*}
     \omega^\sharp\xi^\sharp+\xi^\sharp\omega^\sharp
 & =-\langle\omega,\xi\rangle\id, \\
  \eta^\sharp\zeta^\sharp+\zeta^\sharp\eta^\sharp
 &=-\langle\eta,\zeta\rangle\id, 
\end{align*}
together with the orthogonality of $\Lambda^+V$ and $\Lambda^-V$. 
Since both spaces have dimension nine, $\Phi$ is an isometric
isomorphism. We henceforth write
$\omega\eta=\omega^\sharp\eta^\sharp$. In particular,
$\{\omega_i\eta_j:1\leq i,j\leq3\}$ is an orthonormal basis of
$S^2_0(V)$.

Under the identification $\Lambda^2V\cong\mathfrak{so}(4)$, the
Lie bracket of two-forms is defined by
\begin{equation*}
 \bigl([\alpha,\beta]\bigr)^\sharp
 =\alpha^\sharp\beta^\sharp-\beta^\sharp\alpha^\sharp.
\end{equation*}
Let $\varepsilon_{ijk}$ be the alternating symbol with
$\varepsilon_{123}=1$. The bases in \eqref{eq: omega def} and
\eqref{eq: eta def} satisfy
\begin{equation}\label{eq:omega eta commutator}
 [\omega_i,\omega_k]
 =\sqrt2\,\varepsilon_{ikm}\omega_m,
 \qquad
[\eta_j,\eta_l]
 =-\sqrt2\,\varepsilon_{jln}\eta_n.    
\end{equation}
Here and below, repeated indices in tensor contractions are summed
from $1$ to $3$. The opposite signs in
\eqref{eq:omega eta commutator} reflect the two quaternionic orientations.

We now verify the description of $\Sigma_p$ in the introduction.
On $V$, write $\Sigma=\{h\in S^2_0(V):h^2=\frac14\id\}$,
as in \eqref{eq:Sigma-definition}. The anticommutator identities give
\[
 (\omega^\sharp)^2=-\frac{|\omega|^2}{2}\id,
 \qquad
 (\eta^\sharp)^2=-\frac{|\eta|^2}{2}\id.
\]
Since the two endomorphisms commute,
\[
 (\omega\eta)^2=\frac{|\omega|^2|\eta|^2}{4}\id,
 \qquad |\omega\eta|^2=|\omega|^2|\eta|^2.
\]
Thus the product of two unit forms belongs to $\Sigma$.
Conversely, $h\in\Sigma$ has eigenvalues $-1/2$ and $1/2$, each with
multiplicity two because $\tr h=0$. Let $P$ be its $-1/2$ eigenspace
and choose an oriented orthonormal basis $e_1,e_2$ of $P$, extended
to a positively oriented orthonormal basis of $V$. With
$u=e_1\wedge e_2$, define
\begin{equation}\label{eq:plane-forms}
 \omega=\frac{u+\star u}{\sqrt2},\qquad
 \eta=\frac{u-\star u}{\sqrt2}.
\end{equation}
Then $|\omega|=|\eta|=1$ and
\[
 \omega\eta=\frac12(\pi_{P^\perp}-\pi_P)=h.
\]
Thus every element of $\Sigma$ arises from a two-plane. Reversing
the orientation of $P$ changes both forms by a sign and leaves their
product unchanged; replacing $P$ by $P^\perp$ changes $h$ to $-h$.

For the forms in \eqref{eq:plane-forms}, one has
$u=(\omega+\eta)/\sqrt2$ and
$\star u=(\omega-\eta)/\sqrt2$. It follows that
\begin{align}
 K^\perp(P)
 &=\frac12\bigl(\widehat R(u,u)+\widehat R(\star u,\star u)\bigr)
 \notag\\*
 &=\frac12\bigl(\widehat R(\omega,\omega)
                  +\widehat R(\eta,\eta)\bigr)\notag\\*
 &=\frac S{12}+\frac12\bigl(\langle W^+\omega,\omega\rangle
                          +\langle W^-\eta,\eta\rangle\bigr).
 \label{eq:biorthogonal-weyl}
\end{align}
The mixed block $B$ cancels in this average. Conversely, any pair
of unit forms $\omega\in\Lambda^+$ and $\eta\in\Lambda^-$ gives a
unit decomposable two-form $u=(\omega+\eta)/\sqrt2$, since
$u\wedge u=0$. Thus the two unit forms in
\eqref{eq:biorthogonal-weyl} vary independently.

Write the eigenvalues of $W^\pm$ as
$w_1^\pm\leq w_2^\pm\leq w_3^\pm$. Since $W^\pm$ are trace-free,
$w_1^\pm+w_2^\pm+w_3^\pm=0$. Taking the minimum and maximum in
\eqref{eq:biorthogonal-weyl} gives~\cite{CR14}
\begin{equation}\label{eq:biorthogonal-extrema}
 K^\perp_{\min}=\frac S{12}+\frac{w_1^++w_1^-}{2},
 \qquad
 K^\perp_{\max}=\frac S{12}+\frac{w_3^++w_3^-}{2}.
\end{equation}

\section{Proof of Theorem~\ref{thm:main} and Corollary~\ref{cor:diagonal-identity}}\label{sec:main-proof}

We first prove the identity for the bases introduced in
Section~\ref{sec:dimension-four}.
\begin{proposition}\label{prop: main identity}
Let $\{\omega_i\}_{i=1}^3$ and $\{\eta_j\}_{j=1}^3$ be the bases
defined in \eqref{eq: omega def} and \eqref{eq: eta def}.
For all $1\leq i,j,k,l\leq3$, we have
\begin{equation}\label{eq:main identity basis s3}
 \mathring R(\omega_i\eta_j,\omega_k\eta_l)
 +\delta_{jl}\widehat R(\omega_i,\omega_k)
 +\delta_{ik}\widehat R(\eta_j,\eta_l)
 +\frac12\widehat R([\omega_i,\omega_k],[\eta_j,\eta_l])
 =\frac S4\delta_{ik}\delta_{jl}.
\end{equation}
\end{proposition}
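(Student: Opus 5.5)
We prove \eqref{eq:main identity basis s3} by a direct computation in the fixed oriented orthonormal basis $\{e_1,\dots,e_4\}$. Both sides are linear in $R$, so we first express every term through the components $R_{abcd}$. For each pair $(i,j)$ we compute the symmetric matrix $\omega_i\eta_j=\omega_i^\sharp\eta_j^\sharp$ explicitly from \eqref{eq: omega def} and \eqref{eq: eta def}. For example, $\omega_1\eta_1$ is $\tfrac12\operatorname{diag}(1,1,-1,-1)$ up to sign, and the off-diagonal products give combinations such as $\tfrac12(e_1\odot e_4\pm e_2\odot e_3)$ up to sign and normalization. Each of the nine elements is either diagonal with entries $\pm\tfrac12$, or a combination of two symmetric products $e_a\odot e_b$ with $\{a,b,c,d\}=\{1,2,3,4\}$. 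The pairings $\mathring R(\omega_i\eta_j,\omega_k\eta_l)$ are then read off from Lemma~\ref{lemma: eq identities}. For diagonal elements we use the first identity for $\overline R$, which is legitimate because $\mathring R(h,k)=\langle\overline Rh,k\rangle$ for traceless $h,k$. These pairings become explicit sums of sectional-curvature-type terms $R_{abab}$ and mixed terms $R_{abcd}$.

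Next we compute the remaining terms. We have $\widehat R(\omega_i,\omega_k)$ and $\widehat R(\eta_j,\eta_l)$ from $\langle\widehat R(e_a\wedge e_b),e_c\wedge e_d\rangle=R_{abcd}$. By \eqref{eq:omega eta commutator}, the commutator term equals $-\varepsilon_{ikm}\varepsilon_{jln}\widehat R(\omega_m,\eta_n)$, which is a $B$-block entry. The symmetries reduce the work considerably. The identity is symmetric under $(i,j)\leftrightarrow(k,l)$. Moreover, the cyclic permutation $e_2\to e_3\to e_4\to e_2$ permutes $\omega_1,\omega_2,\omega_3$ cyclically, and likewise the $\eta$'s, up to the sign conventions built into the definitions. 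So it suffices to check a few representative index patterns:
\begin{enumerate}
\item $i=k$, $j=l$ (the diagonal case), where the commutator term vanishes and $S/4$ appears;
\item $i=k$, $j\ne l$, and symmetrically $i\ne k$, $j=l$, where only one $\widehat R$-term survives;
\item $i\ne k$, $j\ne l$, where only $\mathring R$ and the commutator term survive, so the claim is that $\mathring R(\omega_i\eta_j,\omega_k\eta_l)=\tfrac{1}{2}\varepsilon_{ikm}\varepsilon_{jln}\widehat R(\omega_m,\eta_n)$.
\end{enumerate}

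In the diagonal case, say $i=j=k=l=1$, one finds $\mathring R(\omega_1\eta_1,\omega_1\eta_1)=\tfrac14\sum$ of signed $R_{abab}$. Concretely, pairing $\operatorname{diag}(1,1,-1,-1)/2$ with itself gives $-\tfrac12(R_{1212}+R_{3434})+\tfrac12(R_{1313}+R_{1414}+R_{2323}+R_{2424})$ after using $\overline R(e_a\odot e_a)$ and the sign convention. Meanwhile $\widehat R(\omega_1,\omega_1)+\widehat R(\eta_1,\eta_1)=R_{1212}+R_{3434}$, since the mixed term $R_{1234}$ cancels. Summing gives $\tfrac12\sum_{a<b}R_{abab}\cdot\tfrac12$-type combinations that should collapse to $S/4$. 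In the mixed cases the first Bianchi identity, in the form $R_{1234}+R_{1342}+R_{1423}=0$, is needed to match terms like $R_{1324}$ against the $B$-entries.

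The main obstacle is sign bookkeeping: the orientation of $\omega_2,\eta_2$ (built from $e_4\wedge e_2$), the sign of $\sharp$, and the sign conventions in $R_{iklj}$ all interact. To keep this under control, we would first fix the $4\times4$ matrices of $\omega_i^\sharp$ and $\eta_j^\sharp$ once and for all. We would then verify the commutator relations \eqref{eq:omega eta commutator} with these matrices as a consistency check, and only then compute the products and pairings. A final sanity check is the constant curvature case, where $\mathring R=\kappa\,\id$, $\widehat R=\kappa\,\id$ and $S=12\kappa$: the diagonal identity then reads $\kappa+2\kappa=3\kappa$.
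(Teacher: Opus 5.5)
Your plan is essentially the paper's proof. Both verify the identity entry by entry in the fixed basis, pairing the explicit products $\omega_i\eta_j$ via Lemma~\ref{lemma: eq identities} and rewriting the commutator term with \eqref{eq:omega eta commutator}. Your cyclic reduction is legitimate. The map $e_1\mapsto e_1$, $e_2\mapsto e_3\mapsto e_4\mapsto e_2$ is orientation-preserving and sends $\omega_1\mapsto\omega_2\mapsto\omega_3\mapsto\omega_1$ and $\eta_1\mapsto\eta_2\mapsto\eta_3\mapsto\eta_1$ with no signs at all. Since the identity is natural under such isometries and must hold for every $R$, one tuple per orbit suffices. This is exactly the symmetry visible in the paper's blocks $D_1,D_2,D_3$ and $O_1,O_2,O_3$, which the paper simply writes out in full.

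There is one slip to fix. In case (3), your own (correct) formula $-\varepsilon_{ikm}\varepsilon_{jln}\widehat R(\omega_m,\eta_n)$ for the commutator term gives the target
$\mathring R(\omega_i\eta_j,\omega_k\eta_l)=\varepsilon_{ikm}\varepsilon_{jln}\widehat R(\omega_m,\eta_n)$,
with no factor $\tfrac12$. For example, $\mathring R(\omega_1\eta_1,\omega_2\eta_2)=\tfrac12(R_{1414}-R_{2323})=\widehat R(\omega_3,\eta_3)$, so the statement you wrote would fail.

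Also, ``a few representative index patterns'' is really fifteen orbit representatives:
three in case (1), three in each of the two subcases of (2), and six in case (3).
Your check of $\omega_1\eta_1$ does not cover the diagonal entries at $\omega_1\eta_2$ and $\omega_1\eta_3$, which have the other (off-diagonal) shape.

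Finally, you will not need the first Bianchi identity anywhere. For $R_{abcd}=\epsilon_{abcd}$ one has $\widehat R=\star$, $\mathring R=0$, and $S=0$, and both sides of \eqref{eq:main identity basis s3} vanish. So the identity holds on all of $S^2(\Lambda^2V)$, and every entry matches using only the antisymmetries and pair symmetry.
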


\begin{proof}[Proof of Proposition~\ref{prop: main identity}]
With respect to the ordered orthonormal basis
$\{\omega_1,\omega_2,\omega_3,\eta_1,\eta_2,\eta_3\}$, write
\begin{equation*}
\widehat R
 =\begin{pmatrix}A&B\\ 
 B^t&C\end{pmatrix}.
\end{equation*}
Direct calculation gives the entries of $A$, $C$, and $B$:
\begin{align*}
2 A_{11} = 2\widehat R (\omega_1,\omega_1) &= R_{1212} +R_{3434} +2R_{1234}, \\
2 A_{22} = 2\widehat R (\omega_2,\omega_2) &= R_{1313} +R_{2424} +2R_{1342}, \\
2 A_{33} = 2\widehat R (\omega_3,\omega_3) &= R_{1414} +R_{2323} +2R_{1423}, \\
2 A_{12} = 2\widehat R (\omega_1,\omega_2) &= R_{1213} +R_{1242} +R_{1334}+R_{3442}, \\
2 A_{13} = 2\widehat R (\omega_1,\omega_3) &= R_{1214} +R_{1223} +R_{1434}+R_{2334}, \\*
2 A_{23} = 2\widehat R (\omega_2,\omega_3) &= R_{1314} +R_{1323} +R_{1442}+R_{2342}, 
\end{align*}
\begin{align*}
2 C_{11} = 2\widehat R (\eta_1,\eta_1) &= R_{1212} +R_{3434} -2R_{1234}, \\
2 C_{22} = 2\widehat R (\eta_2,\eta_2) &= R_{1313} +R_{2424} -2R_{1342}, \\
2 C_{33} =2\widehat R (\eta_3,\eta_3) &= R_{1414} +R_{2323} -2R_{1423}, \\
2 C_{12} = 2\widehat R (\eta_1,\eta_2) &= R_{1213} -R_{1242} -R_{1334}+R_{3442}, \\
2 C_{13} = 2\widehat R (\eta_1,\eta_3) &= R_{1214} -R_{1223} -R_{1434}+R_{2334}, \\*
2 C_{23} = 2\widehat R (\eta_2,\eta_3) &= R_{1314} -R_{1323} -R_{1442}+R_{2342}, 
\end{align*}
and
\begin{align*}
2 B_{11} = 2\widehat R (\omega_1,\eta_1) &= R_{1212}-R_{3434}, \\
2 B_{12} = 2\widehat R (\omega_1,\eta_2) &= R_{1213}-R_{1242}+R_{1334}-R_{3442}, \\
2 B_{13} = 2\widehat R (\omega_1,\eta_3) &= R_{1214}-R_{1223} +R_{1434}-R_{2334}, \\
2 B_{21} = 2\widehat R (\omega_2,\eta_1) &= R_{1213}-R_{1334}+R_{1242}-R_{3442}, \\
2 B_{22} = 2\widehat R (\omega_2,\eta_2) &= R_{1313}-R_{4242}, \\
2 B_{23} = 2\widehat R (\omega_2,\eta_3) &= R_{1314}-R_{1323}+R_{1442}-R_{2342}, \\
2 B_{31} = 2\widehat R (\omega_3,\eta_1) &= R_{1214}-R_{1434} +R_{1223} -R_{2334}, \\
2 B_{32} = 2\widehat R (\omega_3,\eta_2) &= R_{1314}-R_{1442}+R_{1323}-R_{2342}, \\*
2 B_{33} = 2\widehat R (\omega_3,\eta_3) &= R_{1414}-R_{2323}, 
\end{align*}
 
The products $\omega_i\eta_j$ give the following ordered orthonormal
basis of $S^2_0(V)$:
\begin{align*}
h_1 &= \omega_1 \eta_1 = \tfrac{1}{4}(-e_1 \odot e_1 -e_2 \odot e_2 +e_3 \odot e_3 +e_4 \odot e_4),\\   
h_2 &= \omega_1 \eta_2 =\tfrac{1}{2}(e_1\odot e_4 -e_2 \odot e_3),\\   
h_3 &= \omega_1 \eta_3 =\tfrac{1}{2}(-e_1 \odot e_3 -e_2 \odot e_4),\\   
h_4 &= \omega_2 \eta_1 =\tfrac{1}{2}(-e_1 \odot e_4 -e_2 \odot e_3),\\   
h_5 &= \omega_2 \eta_2 =\tfrac{1}{4}(-e_1 \odot e_1 +e_2 \odot e_2 -e_3 \odot e_3 +e_4 \odot e_4),\\
h_6 &= \omega_2 \eta_3 =\tfrac{1}{2}(e_1 \odot e_2 -e_3 \odot e_4) ,\\   
h_7 &= \omega_3 \eta_1 =\tfrac{1}{2}(e_1 \odot e_3 -e_2 \odot e_4),\\   
h_8 &= \omega_3 \eta_2 =\tfrac{1}{2}(-e_1 \odot e_2 -e_3 \odot e_4) ,\\*   
h_9 &= \omega_3 \eta_3 =\tfrac{1}{4}(-e_1 \odot e_1 +e_2 \odot e_2 +e_3 \odot e_3 -e_4 \odot e_4).  
\end{align*}
Relative to this basis, the matrix of $\mathring R$ is
\begin{equation*}
    \mathring R =\begin{pmatrix}
        D_1 & O_1 & O_2 \\
        O_1^t & D_2 & O_3 \\
        O_2^t & O_3^t & D_3
    \end{pmatrix}.
\end{equation*}
The diagonal blocks are
\begin{equation*}
    D_1 = \frac{S}{4}\id + \begin{pmatrix}
    -A_{11}-C_{11} & -C_{12} & -C_{13} \\
     -C_{12} & -A_{11}-C_{22} & -C_{23}  \\
     -C_{13} & -C_{23} & -A_{11}  -C_{33}  
    \end{pmatrix}, 
\end{equation*}
\begin{equation*}
    D_2 = \frac{S}{4}\id + \begin{pmatrix}
     -A_{22}-C_{11} & -C_{12} & -C_{13}  \\
     -C_{12} & -A_{22}-C_{22} & -C_{23}  \\
     -C_{13} & -C_{23} & -A_{22}-C_{33}
    \end{pmatrix},
\end{equation*}
and
\begin{equation*}
    D_3=\frac{S}{4}\id + \begin{pmatrix}
    -A_{33}-C_{11} & -C_{12} & -C_{13} \\
 -C_{12} & -A_{33}-C_{22} & -C_{23} \\
     -C_{13} & -C_{23} & -A_{33}-C_{33} 
    \end{pmatrix}.
\end{equation*}
The off-diagonal blocks are
\begin{equation*}
    O_1 =\begin{pmatrix}
      -A_{12} & B_{33} & -B_{32}  \\
      -B_{33} & -A_{12} & B_{31}  \\
      B_{32} & -B_{31} & -A_{12}
    \end{pmatrix},
\end{equation*}
\begin{equation*}
    O_2 =\begin{pmatrix}
     -A_{13} & -B_{23} & B_{22} \\
      B_{23} & -A_{13} & -B_{21} \\
      -B_{22} & B_{21} & -A_{13}
    \end{pmatrix},
\end{equation*}
and 
\begin{equation*}
    O_3 =\begin{pmatrix}
     -A_{23} & B_{13} & -B_{12} \\
     -B_{13} & -A_{23} & B_{11} \\
      B_{12} & -B_{11} & -A_{23}
    \end{pmatrix}.
\end{equation*}
We verify these matrices using Lemma~\ref{lemma: eq identities}.
For $D_1$, we obtain
\begin{align*}
2\mathring R(h_1, h_1) &=  -R_{1212}-R_{3434}+R_{1313}+R_{1414}+R_{2323}+R_{2424}  =\frac{S}{2}-2A_{11}-2C_{11}, \\
2\mathring R(h_1,h_2) & =  -R_{1213}+R_{1242}+R_{1334}-R_{3442} =-2\widehat R(\eta_1, \eta_2)=-2 C_{12}, \\ 
2\mathring R(h_1,h_3) &=  -R_{1214}+R_{1223}+R_{1434}-R_{2334} =-2\widehat R(\eta_1,\eta_3) =-2C_{13} , \\
2\mathring R(h_2,h_2) &= R_{1414}+R_{2323} -2R_{1234}+2R_{1342} = \frac{S}{2} - 2A_{11}-2C_{22}, \\
2\mathring R(h_2,h_3) & = -R_{1314} +R_{1442}+R_{1323}-R_{2342} = -2C_{23}, \\*
2\mathring R(h_3,h_3) &= R_{1313}+R_{2424} -2R_{1234}+2R_{1423}  = \frac{S}{2} - 2A_{11}-2C_{33}.
\end{align*}
For $D_2$, we obtain
\begin{align*}
2\mathring R(h_4,h_4) &= R_{1414} +R_{2323} +2R_{1234}-2R_{1342}  = \frac{S}{2} - 2A_{22}-2C_{11}, \\
2\mathring R(h_4,h_5) &= R_{1242}+R_{1334} -R_{1213}-R_{3442} = -2\widehat R(\eta_1, \eta_2), \\
2\mathring R(h_4,h_6) &= -R_{1214} +R_{1434}+R_{1223}-R_{2334} = -2\widehat R(\eta_1,\eta_3), \\
2\mathring R(h_5,h_5) &= -R_{1313}-R_{2424} +R_{1414}+R_{2323}+R_{1212}+R_{3434} =\frac{S}{2} -2A_{22} -2C_{22}, \\
2\mathring R(h_5,h_6) &= -R_{1314} -R_{2342}+R_{1323}+R_{1442} = -2\widehat R(\eta_2,\eta_3), \\*
2\mathring R(h_6,h_6) &= R_{1212}+R_{3434}-2R_{1342} +2R_{1423} = \frac{S}{2} - 2A_{22}-2C_{33},
\end{align*}
and for $D_3$, we obtain
\begin{align*}
2\mathring R(h_7,h_7) &= R_{1313}+R_{2424} +2R_{1234}-2R_{1423} = \frac{S}{2} - 2A_{33}-2C_{11}, \\
2\mathring R(h_7,h_8) &= -R_{1213}+R_{1334}+R_{1242}-R_{3442} = -2\widehat R(\eta_1,\eta_2), \\
2\mathring R(h_7,h_9) &= R_{1223}+R_{1434}-R_{1214}-R_{2334} = -2\widehat R(\eta_1, \eta_3), \\
2\mathring R(h_8,h_8) &= R_{1212}+R_{3434}+2R_{1342}-2R_{1423} = \frac{S}{2} - 2A_{33}-2C_{22}, \\
2\mathring R(h_8,h_9) &= R_{1323}+R_{1442}-R_{1314}-R_{2342} = -2\widehat R(\eta_2,\eta_3), \\*
2\mathring R(h_9,h_9) &= -R_{1414}-R_{2323} +R_{1313}+R_{1212}+R_{3434}+R_{2424}  =\frac{S}{2} -2A_{33} -2C_{33}.
\end{align*}
For $O_1$, we compute
\begin{align*}
2\mathring R(h_1,h_4) &=  -R_{1213}-R_{1242}-R_{1334}-R_{3442} =-2A_{12}, \\
2\mathring R(h_1,h_5) &= R_{1414} -R_{2323} =2\widehat R(\omega_3, \eta_3), \\
2\mathring R(h_1,h_6) &=  -R_{1314}+R_{2342}-R_{1323}+R_{1442} =-2\widehat R(\eta_2, \omega_3), \\
2\mathring R(h_2,h_4) &= -R_{1414}+R_{2323}=- 2\widehat R(\omega_3, \eta_3),\\
2\mathring R(h_2,h_5) &= -R_{1242}-R_{1334}-R_{1213}-R_{3442} = -2\widehat R(\omega_1,\omega_2), \\
2\mathring R(h_2,h_6) &= R_{1214}-R_{2334}-R_{1434}+R_{1223} = 2\widehat R(\omega_3,\eta_1), \\
2\mathring R(h_3,h_4) &= R_{1314}+R_{1323} -R_{1442}-R_{2342} = 2\widehat R(\omega_3, \eta_2), \\
2\mathring R(h_3,h_5) &= -R_{1223}+R_{1434}-R_{1214}+R_{2334} = -2\widehat R(\omega_3,\eta_1), \\*
2\mathring R(h_3,h_6) &= -R_{1213}-R_{1334}-R_{1242}-R_{3442} = -2\widehat R(\omega_1, \omega_2). 
\end{align*}
For $O_2$, we compute
\begin{align*}
2\mathring R(h_1,h_7) &=  -R_{1214}-R_{1223}-R_{1434}-R_{2334} =-2\widehat R(\omega_1,\omega_3), \\
2\mathring R(h_1,h_8) &=  -R_{1314}+R_{2342}+R_{1323}-R_{1442} =-2\widehat R(\omega_2,\eta_3), \\
2\mathring R(h_1,h_9) &= R_{1313} -R_{2424} =2\widehat R(\omega_2, \eta_2), \\
2\mathring R(h_2,h_7) &= R_{1314} -R_{2342} +R_{1442}-R_{1323} = 2 \widehat R(\omega_2, \eta_3), \\
2\mathring R(h_2,h_8) &= -R_{1214}-R_{1434}-R_{1223}-R_{2334} = -2 \widehat R(\omega_1, \omega_3), \\ 
2\mathring R(h_2,h_9) &= -R_{1242}+R_{1334}-R_{1213}+R_{3442} = -2\widehat R(\omega_2,\eta_1), \\ 
2\mathring R(h_3,h_7) &= -R_{1313}+R_{2424}=- 2\widehat R(\omega_2, \eta_2),\\
2\mathring R(h_3,h_8) &= R_{1213}-R_{1334}+R_{1242}-R_{3442} = 2 \widehat R(\omega_2, \eta_1), \\*
2\mathring R(h_3,h_9) &= -R_{1223}-R_{1434}-R_{1214}-R_{2334} = -2\widehat R(\omega_1, \omega_3),
\end{align*}
and for $O_3$, we compute
\begin{align*}
2\mathring R(h_4,h_7) &=  -R_{1314}-R_{1442}-R_{1323}-R_{2342}  =-2\widehat R(\omega_2, \omega_3), \\
2\mathring R(h_4,h_8) &=R_{1214}+R_{1434}-R_{1223}-R_{2334} = 2\widehat R(\omega_1, \eta_3), \\
2\mathring R(h_4,h_9) &= R_{1242}-R_{1334}-R_{1213}+R_{3442} = -2\widehat R(\omega_1, \eta_2), \\
2\mathring R(h_5,h_7) &=-R_{1214}+R_{1223}+R_{2334}-R_{1434} = -2\widehat R(\omega_1, \eta_3),  \\
2\mathring R(h_5,h_8) &= -R_{1314}-R_{2342}-R_{1323}-R_{1442} = -2\widehat R(\omega_2,\omega_3), \\
2\mathring R(h_5,h_9) &= R_{1212}-R_{3434} =2\widehat R(\omega_1, \eta_1), \\
2\mathring R(h_6,h_7) &= R_{1213}-R_{3442}-R_{1242}+R_{1334} = 2\widehat R(\omega_1,\eta_2), \\
2\mathring R(h_6,h_8) &= -R_{1212}+R_{3434}=- 2\widehat R(\omega_1, \eta_1),\\* 
2\mathring R(h_6,h_9) &= -R_{1323}-R_{1442}-R_{1314}-R_{2342} = -2\widehat R(\omega_2,\omega_3). 
\end{align*}
These identities, together with \eqref{eq:omega eta commutator}, verify \eqref{eq:main identity basis s3} for all $1\leq i,j,k,l \leq 3$. 
\end{proof}

We now pass from the basis identity to arbitrary two-forms.
\begin{proof}[Proof of Theorem~\ref{thm:main}]
Since $\{\omega_1,\omega_2,\omega_3\}$ and
$\{\eta_1,\eta_2,\eta_3\}$ are oriented orthonormal bases of
$\Lambda^+V$ and $\Lambda^-V$, respectively, 
we can write \(\omega,\xi\in\Lambda^+ V \) and \(\eta,\zeta\in\Lambda^-V\) as
\[
 \omega=x_i\omega_i,\qquad
 \xi=z_k\omega_k,\qquad
 \eta=y_j\eta_j,\qquad
 \zeta=w_l\eta_l.
\]
By bilinearity,
\[
\omega\eta=x_i y_j\omega_i \eta_j,\qquad \xi \zeta =z_kw_l \omega_k \eta_l,
\]
and Proposition~\ref{prop: main identity} gives
\begin{align*}
 \mathring R(\omega\eta,\xi\zeta)
 &=x_iy_jz_kw_l\mathring R(\omega_i\eta_j,\omega_k\eta_l)\\
 &=x_iy_jz_kw_l\Bigl(\frac S4\delta_{ik}\delta_{jl}
 -\delta_{jl}\widehat R(\omega_i,\omega_k)
 -\delta_{ik}\widehat R(\eta_j,\eta_l)
 -\frac12\widehat R([\omega_i,\omega_k],[\eta_j,\eta_l])\Bigr)\\*
 &=\frac S4\langle\omega,\xi\rangle\langle\eta,\zeta\rangle
 -\langle\eta,\zeta\rangle\widehat R(\omega,\xi)
 -\langle\omega,\xi\rangle\widehat R(\eta,\zeta)
 -\frac12\widehat R([\omega,\xi],[\eta,\zeta]),
\end{align*}
which proves the theorem.  
\end{proof}

\begin{proof}[Proof of Corollary~\ref{cor:diagonal-identity}]
Take $\xi=\omega$ and $\zeta=\eta$ in \eqref{eq:master}.
Since $[\omega,\omega]=0$ and $[\eta,\eta]=0$, the commutator term
vanishes, and the remaining terms give
\eqref{eq:diagonal-identity}.
\end{proof}

\section{Proof of Theorem~\ref{thm:pinching}}
\label{sec:pinching-proof}

\begin{proof}[Proof of Theorem~\ref{thm:pinching}]
At each point $p\in M$, put $\alpha=\lambda_{\max}(W^+)$ and
$\beta=\lambda_{\max}(W^-)$. Since $W^\pm$ are trace-free,
$\alpha,\beta\geq0$. Corollary~\ref{cor:diagonal-identity} and
\eqref{eq:biorthogonal-extrema} give
\begin{equation*}
 \min_{h\in\Sigma_p}\mathring R(h,h)
 =\frac S4-2K^\perp_{\max}
 =\frac S{12}-\alpha-\beta.
\end{equation*}
Thus the non-strict hypothesis is equivalent to
\begin{equation}\label{eq:weyl-pinching}
 \alpha+\beta\leq\frac S6,
\end{equation}
and the strict hypothesis gives $\alpha+\beta<S/6$.
In particular, $S\geq0$ and $\alpha,\beta\leq S/6$, with strict
inequalities under the strict hypothesis.

These inequalities imply nonnegative, respectively positive, isotropic
curvature. Indeed, for an oriented orthonormal frame and
$\theta=(e_1\wedge e_2+e_3\wedge e_4)/\sqrt2$, one has
\begin{equation*}
 R_{1313}+R_{1414}+R_{2323}+R_{2424}-2R_{1234}
 =2\left(\frac S6-\langle W^+\theta,\theta\rangle\right),
\end{equation*}
with the analogous formula for $W^-$ after reversing orientation.
The classification of Chen-Tang-Zhu~\cite{CTZ12} now proves
part~(1): orientability excludes their nonorientable connected-sum
factors, and a $\mathbb{S}^4$ summand does not change the diffeomorphism
type.

For part~(2), if $M$ admits a metric with positive isotropic curvature,
the same classification gives~(a). Otherwise, we claim that $M$ has a
finite cover $N$ with $b_2(N)>0$. By
Micallef-Wang~\cite[Theorem~4.10]{MW93}, at a sufficiently small
positive time under the Ricci flow, the remaining possibilities are a flat
metric, a product of two surfaces on the universal cover, or a
K\"ahler metric on a manifold diffeomorphic to $\mathbb{CP}^2$.
In the flat case, take a finite torus cover. In the product case,
the uniqueness of the de Rham decomposition allows us to pass to a
finite cover preserving both surface factors and their orientations;
the parallel factor area forms then descend to nonzero harmonic
forms and hence represent nonzero cohomology classes. In the last
case, $b_2(M)=1$. This proves the claim.

Equip $N$ with the lift of the original metric $g$. By Hodge theory,
after reversing orientation if necessary, there is a nonzero harmonic
self-dual two-form $\varphi$. The Weitzenb\"ock formula gives
\begin{equation}\label{eq:harmonic-bochner}
 0=\int_N\left(|\nabla\varphi|^2+
 \left\langle\left(\frac S3\id-2W^+\right)\varphi,
 \varphi\right\rangle\right)\,dV.
\end{equation}
Since $\alpha\leq S/6$, both terms are nonnegative, so $\varphi$ is
parallel. Normalize it so that $|\varphi|^2=2$. It is then the
K\"ahler form of a parallel orthogonal complex structure.
For this orientation, the K\"ahler
identity
\[
 \operatorname{Spec}(W^+)
 =\left\{\frac S6,-\frac S{12},-\frac S{12}\right\}
\]
gives $\alpha=S/6$. Hence \eqref{eq:weyl-pinching} forces $\beta=0$,
and the trace-free property of $W^-$ gives $W^-=0$.

Thus $N$ is a compact self-dual K\"ahler surface with $S\geq0$.
Derdzi\'nski's theorem~\cite[Theorem~1]{Der83} implies that $N$ is
locally symmetric. By the classification recalled in
Itoh~\cite[Theorem~C]{Ito84},
its universal Riemannian cover, which is also that of $M$, is
$\mathbb R^4$, $\mathbb{S}^2(c)\times\mathbb H^2(-c)$ for some $c>0$,
or $\mathbb{CP}^2$ with a scaled Fubini-Study metric.
The first two possibilities give~(b) and~(c). In the last case,
orient $M$ so that the universal covering is orientation-preserving
for the complex orientation on $\mathbb{CP}^2$. If its degree is $d$,
multiplicativity of the signature gives
\[
 1=\tau(\mathbb{CP}^2)=d\,\tau(M).
\]
Consequently $d=1$, giving~(d).

The alternatives are mutually exclusive. For any metric with positive
isotropic curvature, the curvature term in
\eqref{eq:harmonic-bochner} is positive definite on both
$\Lambda^+$ and $\Lambda^-$, so every finite cover has $b_2=0$.
Each of cases~(b), (c), and~(d) has a finite cover with $b_2>0$ by
the preceding arguments. Thus none can occur together with~(a).
Cases~(b), (c), and~(d) are distinguished by their universal
Riemannian covers.

Finally, if $M$ is simply connected, the noncompact universal covers
in~(b) and~(c) are impossible. In~(a), every compact cylinder quotient
has infinite fundamental group, and the fundamental group of a
connected sum is the free product of those of its summands. Thus
only $\mathbb{S}^4$ remains in~(a), proving the final assertion.
\end{proof}



\bibliographystyle{alpha}
\bibliography{references}

\end{document}